\begin{document}

\restylefloat{table}
\newtheorem{thm}[equation]{Theorem}
\numberwithin{equation}{section}
\newtheorem{cor}[equation]{Corollary}
\newtheorem{expl}[equation]{Example}
\newtheorem{rmk}[equation]{Remark}
\newtheorem{conv}[equation]{Convention}
\newtheorem{claim}[equation]{Claim}
\newtheorem{lem}[equation]{Lemma}
\newtheorem{sublem}[equation]{Sublemma}
\newtheorem{conj}[equation]{Conjecture}
\newtheorem{defin}[equation]{Definition}
\newtheorem{diag}[equation]{Diagram}
\newtheorem{prop}[equation]{Proposition}
\newtheorem{notation}[equation]{Notation}
\newtheorem{tab}[equation]{Table}
\newtheorem{fig}[equation]{Figure}
\newcounter{bean}
\renewcommand{\theequation}{\thesection.\arabic{equation}}

\raggedbottom \voffset=-.7truein \hoffset=0truein \vsize=8truein
\hsize=6truein \textheight=8truein \textwidth=6truein
\baselineskip=18truept

\def\mapright#1{\ \smash{\mathop{\longrightarrow}\limits^{#1}}\ }
\def\mapleft#1{\smash{\mathop{\longleftarrow}\limits^{#1}}}
\def\mapup#1{\Big\uparrow\rlap{$\vcenter {\hbox {$#1$}}$}}
\def\mapdown#1{\Big\downarrow\rlap{$\vcenter {\hbox {$\ssize{#1}$}}$}}
\def\mapne#1{\nearrow\rlap{$\vcenter {\hbox {$#1$}}$}}
\def\mapse#1{\searrow\rlap{$\vcenter {\hbox {$\ssize{#1}$}}$}}
\def\mapr#1{\smash{\mathop{\rightarrow}\limits^{#1}}}
\def\ss{\smallskip}
\def\s{\sigma}
\def\l{\lambda}
\def\vp{v_1^{-1}\pi}
\def\at{{\widetilde\alpha}}

\def\sm{\wedge}
\def\la{\langle}
\def\ra{\rangle}
\def\ev{\text{ev}}
\def\od{\text{od}}
\def\on{\operatorname}
\def\ol#1{\overline{#1}{}}
\def\spin{\on{Spin}}
\def\cat{\on{cat}}
\def\Lbar{\overline{\Lambda}}
\def\qed{\quad\rule{8pt}{8pt}\bigskip}
\def\ssize{\scriptstyle}
\def\a{\alpha}
\def\bz{{\Bbb Z}}
\def\Rhat{\hat{R}}
\def\im{\on{im}}
\def\ct{\widetilde{C}}
\def\ext{\on{Ext}}
\def\sq{\on{Sq}}
\def\eps{\epsilon}
\def\ar#1{\stackrel {#1}{\rightarrow}}
\def\br{{\bold R}}
\def\bC{{\bold C}}
\def\bA{{\bold A}}
\def\bB{{\bold B}}
\def\bD{{\bold D}}
\def\bC{{\bold C}}
\def\bh{{\bold H}}
\def\bQ{{\bold Q}}
\def\bP{{\bold P}}
\def\bx{{\bold x}}
\def\bo{{\bold{bo}}}
\def\dh{\widehat{d}}
\def\si{\sigma}
\def\Vbar{{\overline V}}
\def\dbar{{\overline d}}
\def\wbar{{\overline w}}
\def\Sum{\sum}
\def\tfrac{\textstyle\frac}

\def\tb{\textstyle\binom}
\def\Si{\Sigma}
\def\w{\wedge}
\def\equ{\begin{equation}}
\def\b{\beta}
\def\G{\Gamma}
\def\L{\Lambda}
\def\g{\gamma}
\def\d{\delta}
\def\k{\kappa}
\def\psit{\widetilde{\Psi}}
\def\tht{\widetilde{\Theta}}
\def\psiu{{\underline{\Psi}}}
\def\thu{{\underline{\Theta}}}
\def\aee{A_{\text{ee}}}
\def\aeo{A_{\text{eo}}}
\def\aoo{A_{\text{oo}}}
\def\aoe{A_{\text{oe}}}
\def\vbar{{\overline v}}
\def\endeq{\end{equation}}
\def\sn{S^{2n+1}}
\def\zp{\bold Z_p}
\def\cR{{\mathcal R}}
\def\P{{\mathcal P}}
\def\cQ{{\mathcal Q}}
\def\cj{{\cal J}}
\def\zt{{\bold Z}_2}
\def\bs{{\bold s}}
\def\bof{{\bold f}}
\def\bq{{\bold Q}}
\def\be{{\bold e}}
\def\Hom{\on{Hom}}
\def\ker{\on{ker}}
\def\kot{\widetilde{KO}}
\def\coker{\on{coker}}
\def\da{\downarrow}
\def\colim{\operatornamewithlimits{colim}}
\def\zphat{\bz_2^\wedge}
\def\io{\iota}
\def\om{\omega}
\def\Prod{\prod}
\def\e{{\cal E}}
\def\zlt{\Z_{(2)}}
\def\exp{\on{exp}}
\def\abar{{\overline a}}
\def\xbar{{\overline x}}
\def\ybar{{\overline y}}
\def\zbar{{\overline z}}
\def\mbar{{\overline m}}
\def\nbar{{\overline n}}
\def\sbar{{\overline s}}
\def\kbar{{\overline k}}
\def\bbar{{\overline b}}
\def\et{{\widetilde E}}
\def\ni{\noindent}
\def\tsum{\textstyle \sum}
\def\coef{\on{coef}}
\def\den{\on{den}}
\def\lcm{\on{l.c.m.}}
\def\Ext{\operatorname{Ext}}
\def\iso{\approx}
\def\lra{\longrightarrow}
\def\vi{v_1^{-1}}
\def\ot{\otimes}
\def\psibar{{\overline\psi}}
\def\thbar{{\overline\theta}}
\def\mhat{{\hat m}}
\def\exc{\on{exc}}
\def\ms{\medskip}
\def\ehat{{\hat e}}
\def\etao{{\eta_{\text{od}}}}
\def\etae{{\eta_{\text{ev}}}}
\def\dirlim{\operatornamewithlimits{dirlim}}
\def\gt{\widetilde{L}}
\def\lt{\widetilde{\lambda}}
\def\st{\widetilde{s}}
\def\ft{\widetilde{f}}
\def\sgd{\on{sgd}}
\def\lfl{\lfloor}
\def\rfl{\rfloor}
\def\ord{\on{ord}}
\def\gd{{\on{gd}}}
\def\rk{{{\on{rk}}_2}}
\def\nbar{{\overline{n}}}
\def\MC{\on{MC}}
\def\lg{{\on{lg}}}
\def\cH{\mathcal{H}}
\def\cS{\mathcal{S}}
\def\cP{\mathcal{P}}
\def\N{{\Bbb N}}
\def\Z{{\Bbb Z}}
\def\Q{{\Bbb Q}}
\def\R{{\Bbb R}}
\def\C{{\Bbb C}}
\def\Lb{\overline\Lambda}
\def\mo{\on{mod}}
\def\xt{\times}
\def\notimm{\not\subseteq}
\def\Remark{\noindent{\it  Remark}}
\def\kut{\widetilde{KU}}
\def\Eb{\overline E}
\def\*#1{\mathbf{#1}}
\def\0{$\*0$}
\def\1{$\*1$}
\def\22{$(\*2,\*2)$}
\def\33{$(\*3,\*3)$}
\def\ss{\smallskip}
\def\ssum{\sum\limits}
\def\dsum{\displaystyle\sum}
\def\la{\langle}
\def\ra{\rangle}
\def\on{\operatorname}
\def\proj{\on{proj}}
\def\od{\text{od}}
\def\ev{\text{ev}}
\def\o{\on{o}}
\def\U{\on{U}}
\def\lg{\on{lg}}
\def\a{\alpha}
\def\bz{{\Bbb Z}}
\def\eps{\varepsilon}
\def\bc{{\bold C}}
\def\bN{{\bold N}}
\def\bB{{\bold B}}
\def\bW{{\bold W}}
\def\nut{\widetilde{\nu}}
\def\tfrac{\textstyle\frac}
\def\b{\beta}
\def\G{\Gamma}
\def\g{\gamma}
\def\zt{{\Bbb Z}_2}
\def\zth{{\bold Z}_2^\wedge}
\def\bs{{\bold s}}
\def\bx{{\bold x}}
\def\bof{{\bold f}}
\def\bq{{\bold Q}}
\def\be{{\bold e}}
\def\lline{\rule{.6in}{.6pt}}
\def\xb{{\overline x}}
\def\xbar{{\overline x}}
\def\ybar{{\overline y}}
\def\zbar{{\overline z}}
\def\ebar{{\overline \be}}
\def\nbar{{\overline n}}
\def\ubar{{\overline u}}
\def\bbar{{\overline b}}
\def\et{{\widetilde e}}
\def\M{\mathcal{M}}
\def\lf{\lfloor}
\def\rf{\rfloor}
\def\ni{\noindent}
\def\ms{\medskip}
\def\Dhat{{\widehat D}}
\def\what{{\widehat w}}
\def\Yhat{{\widehat Y}}
\def\abar{{\overline{a}}}
\def\minp{\min\nolimits'}
\def\sb{{$\ssize\bullet$}}
\def\mul{\on{mul}}
\def\N{{\Bbb N}}
\def\Z{{\Bbb Z}}
\def\S{\Sigma}
\def\Q{{\Bbb Q}}
\def\R{{\Bbb R}}
\def\C{{\Bbb C}}
\def\Xb{\overline{X}}
\def\eb{\overline{e}}
\def\notint{\cancel\cap}
\def\cS{\mathcal S}
\def\cR{\mathcal R}
\def\el{\ell}
\def\TC{\on{TC}}
\def\GC{\on{GC}}
\def\wgt{\on{wgt}}
\def\Ht{\widetilde{H}}
\def\wbar{\overline w}
\def\dstyle{\displaystyle}
\def\Sq{\on{sq}}
\def\Om{\Omega}
\def\ds{\dstyle}
\def\tz{tikzpicture}
\def\zcl{\on{zcl}}
\def\bd{\bold{d}}
\def\cM{\mathcal{M}}
\def\io{\iota}
\def\Vb#1{{\overline{V_{#1}}}}
\def\Ebar{\overline{E}}
\def\lb{\,\begin{picture}(-1,1)(1,-1)\circle*{4.5}\end{picture}\ }
\def\lbb{\,\begin{picture}(-1,1)(1,-1)\circle*{8}\end{picture}\ }
\def\zp{\Z_p}
\def\bL{\bold{L}}
\def\st{1.732}

\title
{Geodesic complexity of a tetrahedron}
\author{Donald M. Davis}
\address{Department of Mathematics, Lehigh University\\Bethlehem, PA 18015, USA}
\email{dmd1@lehigh.edu}

\date{June 18, 2023}
\keywords{Geodesic complexity, topological robotics, geodesics, tetrahedron}
\thanks {2000 {\it Mathematics Subject Classification}: 53C22, 52B10, 55M30.}

\maketitle
\begin{abstract} We prove that the geodesic complexity of a regular tetrahedron exceeds its topological complexity by 1 or 2. The proof involves a careful analysis of minimal geodesics on the tetrahedron.\end{abstract}

\section{Introduction}\label{introsec}
In \cite{Far}, Farber introduced the concept of the {\it topological complexity}, $\TC(X)$, of a topological space $X$, which is the minimal number $k$ such that there is a partition
$$X\times X=E_0\sqcup E_1\sqcup\cdots\sqcup E_k$$ 
with each $E_i$ being locally compact and admitting a continuous function $\phi_i:E_i\to P(X)$
such that $\phi_i(x_0,x_1)$ is a path from $x_0$ to 
$x_1$.\footnote{Farber's original definition involved partitions into $k$ sets rather than $k+1$, but for technical reasons the definition here has become more common.}
Here $P(X)$ is the space of paths in $X$, and each $\phi_i$ is called a motion-planning rule. If $X$ is the space of configurations of one or more robots, this models the number of rules required to program the robots to move between any two configurations. 

In \cite{david}, Recio-Mitter suggested that if $X$ is a metric space, then we require that the paths $\phi_i(x_0,x_1)$ be minimal geodesics from $x_0$ to $x_1$, and defined the {\it geodesic complexity}, $\GC(X)$, to be the smallest number $k$ such that there is a partition 
$$X\times X=E_0\sqcup E_1\sqcup\cdots\sqcup E_k$$ 
with each $E_i$ being locally compact and admitting a continuous function $\phi_i:E_i\to P(X)$ such that $\phi_i(x_0,x_1)$ is a minimal geodesic from $x_0$ to $x_1$. Each function $\phi_i$ is called a {\it geodesic motion-planning rule} (GMPR).

One example discussed in \cite{david} was when $X$ is (the surface of) a cube. It is well-known that here $\TC(X)=\TC(S^2)=2$, and he showed that $\GC(X)\ge3$. 

In this paper, we let $X$ be a regular tetrahedron $T$, and prove
\begin{thm} $\GC(T)=3$ or $4$.\end{thm}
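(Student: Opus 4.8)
The plan is to establish the two bounds $\GC(T)\ge 3$ and $\GC(T)\le 4$ separately, with the lower bound being the genuine content and the upper bound following from general structure theorems together with an explicit (though intricate) partition.

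For the lower bound $\GC(T)\ge 3$: since $\TC(T)=\TC(S^2)=2$ (the tetrahedron being homeomorphic to $S^2$), and since $\GC\ge\TC$ always, the real task is to rule out $\GC(T)=2$. The strategy here is to analyze the structure of the \emph{geodesic cut locus}: for a fixed source point $x_0$, the set of target points $x_1$ at which the minimal geodesic from $x_0$ fails to be unique. On a cube, the obstruction to $\GC=2$ came from the fact that the ``total cut locus'' $\{(x_0,x_1): \text{minimal geodesic not unique}\}$ is large and any GMPR domain $E_i$ must avoid interior points where the geodesic degenerates in a discontinuous way. The same philosophy applies to $T$. Concretely, I would first classify the minimal geodesics between two points of $T$ by unfolding the faces into the plane; a minimal geodesic corresponds to a straight segment in some unfolding, and for generic pairs there is a unique such unfolding, while along certain codimension-one loci two unfoldings compete and along codimension-two loci (related to the four vertices, whose total angle $3\pi<2\pi$ makes them the analog of the cube's corners) one gets worse degeneration. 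The key lemma will be a local non-sectioning statement: near a point of the cut locus where exactly two minimal geodesics coalesce from genuinely different directions, no single continuous geodesic motion planner can be defined on a neighborhood, so the cut locus must be covered by at least two of the $E_i$'s; combined with the need for one more piece off the cut locus, this forces $k\ge 3$.

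For the upper bound $\GC(T)\le 4$: I would invoke the general bound $\GC(X)\le \GC_{\text{something}}$ or simply build an explicit partition of $T\times T$ into five locally compact pieces, each admitting a continuous minimal-geodesic selection. The natural approach: stratify $T\times T$ according to which faces (and how many faces) the minimal geodesic crosses. Using the unfolding description, decompose $T\times T$ into the ``diagonal-ish'' region where $x_0,x_1$ lie in a common face or adjacent faces with an obvious short geodesic, versus regions where the geodesic wraps around one or two vertices. On each open stratum the chosen unfolding is locally constant, so the geodesic depends continuously (indeed smoothly) on $(x_0,x_1)$; the boundaries between strata are then distributed among the five pieces using a coloring argument (analogous to how one shows $\TC(S^2)=2$ via a two-piece decomposition built from a single vector field with one zero). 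The count of five pieces should come out of the combinatorics of the $4$ faces and $4$ vertices — I expect something like: one piece for ``same face,'' a couple for ``cross one edge,'' and one or two for ``wrap a vertex,'' with the vertex-wrapping pieces being where the danger of $k$ growing lies.

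The main obstacle I anticipate is the lower bound, specifically the careful analysis of minimal geodesics near the vertices and along the edges of $T$. Unlike the cube (where the three squares meeting at a corner give a clean $3\pi/2$ picture), the tetrahedron has $3\pi$ total angle at each vertex, and minimal geodesics behave subtly near edges and near the ``antipodal-type'' locus of each vertex; I will need a complete and explicit description of the cut locus of a generic point (and how it degenerates as the source crosses an edge) in order to pin down exactly how many pieces of a GMPR partition must meet it. Getting the precise topology of this cut locus right — and proving the local non-sectioning lemma rigorously rather than just heuristically — is the crux, and it is presumably also the reason the theorem only pins $\GC(T)$ down to $\{3,4\}$ rather than to a single value: the hard remaining case is deciding whether the cut locus, together with its self-intersections coming from the four vertices, obstructs a $4$-piece partition or merely a $3$-piece one.
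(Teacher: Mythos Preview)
Your lower-bound argument has a genuine gap. You correctly identify that the task is to rule out $\GC(T)=2$, i.e., to show that no partition into \emph{three} sets with GMPRs exists. But the mechanism you describe---a multiplicity-2 cut locus point forces its neighborhood to be split between two $E_i$'s, plus ``one more piece off the cut locus''---only yields three sets, hence only $\GC(T)\ge 2$, which you already have from $\TC$. (Indeed, nothing prevents the off-cut-locus points from living in the \emph{same} set as one of the two cut-locus pieces.) To force a fourth set you need points of higher multiplicity. The paper's argument exploits the fact that, for $P$ in the interior of a face, the cut locus of $P$ contains two isolated points $U$ and $L$ each of multiplicity~3, and that as $P$ slides to the midpoint $M$ of an edge these coalesce into a single point $B$ of multiplicity~4. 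Starting from $(M,B)$, one builds a three-level nested sequence: $(P'_n,U_{P'_n})$ of multiplicity~3 approaching $(M,B)$; then $(P'_n,Q_{n,m})$ of multiplicity~2 on the cut-locus arc approaching each $(P'_n,U_{P'_n})$; then $(P'_n,Q_{n,m,\ell})$ with a unique geodesic (going in a direction incompatible with all earlier choices) approaching each $(P'_n,Q_{n,m})$. A diagonal-limit argument then shows that no three sets can absorb all four levels. Without this hierarchy of multiplicities your sketch cannot reach $k\ge 3$.

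Two smaller points. First, the cone angle at a vertex of the regular tetrahedron is $\pi$, not $3\pi$; the defect is what makes vertices behave like concentrated curvature. Second, your upper-bound plan (stratify by which faces the geodesic crosses) is plausible in spirit but is not how the paper proceeds: the five sets there are cut out directly from the expanded-cut-locus picture---$E_1$ is the complement of the total cut locus, $E_2$ handles the open cut-locus arcs emanating from vertices, $E_3$ the short $UL$ arcs, and $E_4,E_5$ are small exceptional loci (centroid/vertex pairs and the midpoint-type points). A face-crossing stratification would have to be reconciled with these multiplicity-3 and multiplicity-4 phenomena before it could be made to close up in five pieces.
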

\ni Again, for comparison, $\TC(T)=\TC(S^2)=2$.

In Section \ref{ECLsec}, we introduce what we call the {\it expanded cut locus} in order to study the geodesics on $T$. In Section \ref{upsec}, we prove $\GC(T)\le4$, and in Section \ref{lowersec}, we prove $\GC(T)\ge3$. Despite considerable effort, we have been unable to establish the precise value of $\GC(T)$.

\section{Expanded cut locus}\label{ECLsec}

The {\it cut locus} of a point $P$ on a convex polyhedron is the set of points $Q$ such that there is more than one shortest path from $P$ to $Q$. For the regular tetrahedron $T$, this is conveniently sketched on a flat model of $T$. For $P\in T$, we define the {\it expanded cut locus} of $P$ to be the set of terminal points of equal shortest paths from $P$ to versions of cut-locus points $Q$ in a flat model of $T$, expanded so that the same face may appear more than once.

In Figure \ref{figC} we illustrate the expanded cut locus of a point $P$. The open segments $a\,U_0$ and $a\,U_-$ depict the same set of points in the tetrahedron, and the segments from $P$ to points on each at equal distance from $a$ depict equal shortest segments from $P$ to a point $Q$ in $T$. A similar situation holds for segments from $d$ to two $U$-points, from $c$ to two $L$-points, and from $b$ to two $L$-points. Also the small open segments $U_-\,L_-$ and $U_+\,L_+$ are part of the expanded cut locus of $P$, as they represent the same points in $T$, and segments from $P$ to points at equal height on the two lines are equal minimal geodesics.
The three $U$-points represent the same point in $T$; the paths from $P$ to them are equal shortest paths in $T$. Similarly for the three $L$-points. Thus the expanded cut locus of $P$ is the entire red polygon in Figure \ref{figC} minus the points $a$, $b$, $c$, and $d$.

The actual cut locus for this point $P$ is shown in Figure \ref{figB}, which is a flat version of part of $T$, but does not contain multiple versions of points.
 
\bigskip
\begin{minipage}{6in}
\begin{fig}\label{figC}

{\bf An expanded cut locus.}

\begin{center}

\begin{\tz}[scale=1.8]
\draw (-3,0) -- (3,0) -- (1,2*\st) -- (-1,0) -- (0,-\st) -- (3,2*\st) -- (1,2*\st);
\draw (-3,0) -- (-2,\st) -- (2,\st);
\draw (-2,\st) -- (-1,0);
\draw [ultra thick] (1,0) -- (0,\st) -- (-1,0) -- (1,0);
\draw [dotted] (0,\st) -- (0,0);
\draw [dotted] (-1,0) -- (.5,.5*\st);
\draw [dotted] (1,0) -- (-.5,.5*\st);
\draw [color=red] (1.75,\st*1.35) -- (-1.75,\st*.65) -- (-1.75,\st*.5357) -- (-.25,-\st*.5357) -- (2.25,\st*.5357) -- (2.25,\st*.65) -- (1.75,\st*1.35);
\draw[fill](.25,\st*.5833) circle[radius=.03];
\node at (.16,\st*.5833) {$P$};
\draw[fill,color=red](-.25,-\st*.5357) circle[radius=.03];
\draw[fill,color=red](1.75,\st*1.35) circle[radius=.03];
\draw[fill,color=red](-1.75,\st*.65) circle[radius=.03];
\draw[fill,color=red](-1.75,\st*.5357) circle[radius=.03];
\draw[fill,color=red](2.25,\st*.5357) circle[radius=.03];
\draw[fill,color=red](2.25,\st*.65) circle[radius=.03];
\node at (-.03,\st*1.06) {$a$};
\node at (-1.05,-.09) {$b$};
\node at (1.05,-.08) {$c$};
\node at (3.06,0) {$b$};
\node at (-3.08,0) {$c$};
\node at (-2,\st*1.06) {$d$};
\node at (2.08,\st) {$d$};
\node at (1,2.06*\st) {$b$};
\node at (3.06,2*\st) {$c$};
\node at (0,-1.05*\st) {$d$};
\node at (0,.333*\st) {$C$};
\node at (.5,.5*\st) {$M$};
\node [color=red] at (-.25,-\st*.6) {$L_0$};
\node [color=red] at (-1.8,\st*.48) {$L_-$};
\node [color=red] at (-1.89,\st*.65) {$U_-$};
\node [color=red] at (1.75,\st*1.42) {$U_0$};
\node [color=red] at (2.27,\st*.46) {$L_+$};
\node [color=red] at (2.11,\st*.65) {$U_+$};
\end{\tz}
\end{center}
\end{fig}
\end{minipage}

\bigskip
\begin{minipage}{6in}
\begin{fig}\label{figB}

{\bf The corresponding cut locus.}

\begin{center}

\begin{\tz}[scale=1.8]
\draw (-3,0) -- (-1,0) -- (0,\st) -- (-2,\st) -- (-3,0);
\draw (-1,0) -- (-2,\st);
\node [color=red] at (-1.78,\st*.44) {$L$};
\node [color=red] at (-1.88,\st*.65) {$U$};
\draw [color=red] (-3,0) -- (-1.75,\st*.5357) -- (-1.75,\st*.65) -- (-2,\st);
\draw [color=red] (-1,0) -- (-1.75,\st*.5357) -- (-1.75,\st*.65) -- (0,\st);
\node at (-3.08,0) {$c$};
\node at (-2,\st*1.06) {$d$};
\node at (-.03,\st*1.06) {$a$};
\node at (-1.05,-.09) {$b$};
\end{\tz}
\end{center}
\end{fig}
\end{minipage}

\bigskip
The expanded cut locus of any point $P$ in the interior of  triangle $aCM$ in Figure \ref{figC}, where $C$ is the centroid and $M$ the midpoint of $ac$, has a form similar to the one depicted there. We make this precise in Theorem \ref{thm1}.

\begin{thm} Suppose that in Figure \ref{figC} the coordinates of $a$, $b$, and $c$ are, respectively, $(0,\sqrt3)$, $(-1,0)$, and $(1,0)$, and $P=(x,\a\sqrt3)$ with $0<x<\frac12$ and $\frac13+\frac13x<\a<1-x$. Then the expanded cut locus of $P$ is as depicted in Figure \ref{figC} and described above with
\begin{eqnarray}U_{\pm}&=&(\pm2+x,\sqrt3(1-\frac{x(2-x)}{3(1-\a)}))\nonumber\\
U_0&=&(2-x,\sqrt3(1+\frac{x(2-x)}{3(1-\a)}))\nonumber\\
L_\pm&=&(\pm2+x,\sqrt3\ \frac{1-x^2}{3\a})\label{UL}\\
L_0&=&(-x,\sqrt3\ \frac{x^2-1}{3\a})\nonumber\end{eqnarray}\label{thm1}
\end{thm}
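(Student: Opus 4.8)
The plan is to exhibit Figure~\ref{figC} as a \emph{source unfolding} of $T$ about $P$: one develops the faces of $T$ along the minimal geodesics issuing from $P$, producing a planar region $H$ (the closed red hexagon) which is $T$ cut open along its cut locus and laid flat, so that $\partial H$ double‑covers the cut locus and the expanded cut locus is $\partial H$ with the four cone‑point images deleted. The combinatorics is the one drawn: eight developed face‑copies appear (four copies of $bcd$, two of $abd$, and one each of $abc$ and $acd$), and five planar isometries $\sigma$ are relevant --- the rotation by $\pi$ about each cone point $a,b,c,d$, and the translation by $(4,0)$ that identifies the two copies of $bcd$ reached by unfolding through $abd$ and through $acd$. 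For a point $Q$ in the overlap of two such developments, with developed images $Q_1$ and $Q_2=\sigma(Q_1)$, one has $|PQ_1|=|PQ_2|$ exactly when $Q_1$ lies on the perpendicular bisector of $P$ and $\sigma^{-1}(P)$; these five bisector lines support the edges of the expanded cut locus.

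The coordinates drop out of a short calculation. Taking $\sigma$ to be the rotation by $\pi$ about $b=(-1,0)$, one has $\sigma^{-1}(P)=(-2-x,-\a\sqrt3)$ and bisector $(1+x)(X+1)+\a\sqrt3\,Y=0$; intersecting it with the vertical line $X=x-2$ furnished by the translation gives $L_-$, and intersecting it with the line $(x-1)(X-1)+\a\sqrt3\,Y=0$ coming from the rotation about $c$ gives $L_0$, both exactly as in \eqref{UL}. The rotations about $a$ and $d$ give the lines $xX+(\a-1)\sqrt3(Y-\sqrt3)=0$ and $(x-2)(X-2)+(\a-1)\sqrt3(Y-\sqrt3)=0$; intersecting the first with $X=x-2$, the second with $X=x+2$, and the two with one another yields $U_-$, $U_+$, $U_0$. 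One checks that $a,d,b,c$ land at the midpoints of the four long hexagon edges $U_0U_-$, $U_+U_0$, $L_-L_0$, $L_0L_+$, and a shoelace evaluation of the six vertices gives $\operatorname{Area}(H)=\frac12\cdot8\cdot\sqrt3=4\sqrt3$, independently of $(x,\a)$ --- exactly the area of $T$.

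There remains the geometric heart of the argument: that this $H$ is the expanded cut locus and nothing more. Here the hypotheses $0<x<\frac12$ and $\frac13+\frac13x<\a<1-x$ --- equivalently, that $P$ lies in the interior of one of the six triangles into which the medians divide a face, which by the symmetry group of $T$ is no loss of generality --- are used to check that $P$ lies in the kernel of $H$ (so that $H$ is star‑shaped about $P$, every developed segment $PQ_i$ stays within the developed faces, and hence represents an honest path on $T$ of the asserted length) and that the eight face‑pieces meeting $H$ tile it with no gap or overlap (so that the folding map $f\colon H\to T$ is a local isometry on $\operatorname{int}(H)$ and is onto). Granting this, $f$ is a surjective local isometry from $\operatorname{int}(H)$ onto $T$ minus a tree, between surfaces of equal area $4\sqrt3$, hence an isometry; so the cut locus of $P$ is $f(\partial H)$ and the expanded cut locus is all of $\partial H$ apart from $a,b,c,d$, which sit at straight ($\pi$‑angle) boundary points of $H$ rather than at corners and are each joined to $P$ by a unique minimal geodesic (the segment lying inside face $abc$, or the segment through $acd$ in the case of $d$). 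The main obstacle is precisely this last verification --- ruling out that a developed copy of some $Q$ reached by an unfolding sequence other than the eight drawn gives a strictly shorter geodesic, which would truncate $H$ or introduce extra cut points --- which is the ``careful analysis of minimal geodesics'' promised in the abstract, the area identity $\operatorname{Area}(H)=4\sqrt3$ being the quantitative ingredient that upgrades ``local isometry onto its image'' to the global conclusion.
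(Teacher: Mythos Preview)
Your proposal is correct and, at the computational core, agrees with the paper: both arguments rest on the fact that the edge of the expanded cut locus through a cone point $v$ is the line through $v$ perpendicular to $\overrightarrow{vP}$, i.e., the perpendicular bisector of $P$ and its image under the $\pi$-rotation about $v$. The paper, however, runs this as a pure \emph{verification}: given the formulas \eqref{UL}, it checks one dot product $\overrightarrow{aP}\cdot\overrightarrow{aU_0}=0$ and one midpoint $\tfrac12(U_0+U_-)=a$, declares the remaining cases similar, and then asserts without computation that the red hexagon ``exactly covers the four triangles.'' It even remarks that this slick verification hides how the formulas were found (star unfolding and Voronoi cells). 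Your write-up instead \emph{derives} the formulas by intersecting the five bisector lines, names the construction as a source unfolding, and replaces the paper's ``one readily sees'' with a shoelace area check $\operatorname{Area}(H)=4\sqrt3$ together with the star-shapedness/local-isometry argument. So the route is the same, but you supply the derivation the paper suppresses and give a quantitative substitute for its appeal to the figure; conversely, the paper's two-line check is what you would want once the formulas are in hand.
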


\begin{proof} Since
$$\langle x,\sqrt3(\a-1)\rangle\cdot\langle 2-x,\sqrt3\ \frac{x(2-x)}{3(1-\a)}\rangle=0,$$
$\overrightarrow{aP}\perp \overrightarrow{aU_0}$. Similarly the red lines through $b$, $c$, and $d$ are perpendicular to the segments from $P$ to those points. Another easy verification is that $\frac12(U_0+U_-)=a$, and similarly for $b$, $c$, and $d$. 
So $Pa$ is the perpendicular bisector of $U_0U_-$.
That $\frac12(U_0+U_+)=d$ shows that $U_0$ and $U_+$ lie in the same relative position in triangle $bcd$. 
One readily sees that the region inside the red polygon in Figure \ref{figC} exactly covers the four triangles that comprise the tetrahedron.\end{proof}

This slick verification hides the way in which the formulas (\ref{UL}) were obtained. We initially used the method of star unfolding and Voronoi diagrams developed in \cite{star97}, and applied to the cube in \cite{DM}, using perpendicular bisectors.

The triangle $abc$ in Figure \ref{figC} is divided into six congruent subtriangles. The formulas (\ref{UL}) only apply to points $P$ in the interior of the upper right subtriangle $aCM$, but the expanded cut locus of points in the other five subtriangles can be obtained by obvious rotations and reflections.
We now consider the form of the expanded cut locus for points on the boundary of triangle $aCM$.

As $P$ approaches the edge $aM$, $L_\pm$ approaches $U_\pm$. When $P$ is on the edge, they coincide, and the two multiplicity-3 points in the cut locus become a single multiplicity-4 point, which we will later call $B$, for ``both.'' In Figure \ref{fig2}, we depict the two extreme cases, $P=a$ and $P=M$. The continuum between them should be clear. We label the left one $P\approx a$, because when $P=a$, the line passing through $a$ is not part of the expanded cut locus, since the line connecting $P$ with points on the lines at equal distance from $a$ in each direction are actually the same line in $T$. But for points $P$ arbitrarily close to $a$, the lines from $P$ to points on the line are not the same line in $T$.

\bigskip
\begin{minipage}{6in}
\begin{fig}\label{fig2}

{\bf $P$ on an edge.}

\begin{center}

\begin{\tz}[scale=.9]
\draw (-3,0) -- (3,0) -- (1,2*\st) -- (-1,0) -- (0,-\st) -- (3,2*\st) -- (1,2*\st);
\draw (-3,0) -- (-2,\st) -- (2,\st);
\draw (-2,\st) -- (-1,0);
\draw [ultra thick] (1,0) -- (0,\st) -- (-1,0) -- (1,0);
\draw (4,0) -- (10,0) -- (8,2*\st) -- (6,0) -- (7,-\st) -- (10,2*\st) -- (8,2*\st);
\draw (4,0) -- (5,\st) -- (9,\st);
\draw (5,\st) -- (6,0);
\draw [ultra thick] (8,0) -- (7,\st) -- (6,0) -- (8,0);
\draw [color=red] (0,-\st*.333) -- (2,\st*.333) -- (2,\st*1.667) -- (-2,.333*\st) -- (0,-\st*.333);
\draw [color=red] (6.5,-.5*\st) -- (9.5,.5*\st) -- (8.5,1.5*\st) -- (5.5,.5*\st) -- (6.5,-.5*\st);
\node at (-1,1.5*\st) {$P\approx a$};
\node at (6,1.5*\st) {$P=M$};
\node at (-.09,1.09*\st) {$a$};
\node at (7.5,.5*\st) {$M$};
\end{\tz}
\end{center}
\end{fig}
\end{minipage}

\bigskip

As $P$ approaches the line $x=0$, $U_+$ and $U_0$ approach $d_+$ (the version of $d$ on the positive side in Figure \ref{figC}), and $U_-$ approaches $d_-$. The diagram when $x=0$ is in Figure \ref{fig0}. 

\bigskip
\begin{minipage}{6in}
\begin{fig}\label{fig0}

{\bf $P$ on the line $x=0$.}

\begin{center}

\begin{\tz}[scale=1.5]
\draw [ultra thick] (-1,0) -- (1,0) -- (0,\st) -- (-1,0);
\draw (0,.333*\st) -- (0,\st);
\draw (-1,0) -- (-2,\st) -- (-3,0) -- (-1,0) -- (0,-\st) -- (1,0) -- (3,0) -- (2,\st) -- (1,0);
\draw [color=red] (-2,\st) -- (-2,.5*\st) -- (0,-.5*\st) -- (2,.5*\st) -- (2,\st) -- (-2,\st);
\node at (0,1.05*\st) {$a$};
\node at (2,1.07*\st) {$d=U$};
\node at (-2,1.07*\st) {$d=U$};
\node at (-3,-.05*\st) {$c$};
\node at (3,-.06*\st) {$b$};
\node at (0,-1.05*\st) {$d$};
\node at (-1.05, -.06*\st) {$b$};
\node at (1.05,-.05*\st) {$c$};
\node at (-2.06, .5*\st) {$L$};
\node at (2.06,.5*\st) {$L$};
\node at (0,-.4*\st) {$L$};
\node at (.08,.33*\st){$C$};
\end{\tz}
\end{center}
\end{fig}
\end{minipage}
\bigskip

 As $P$ moves from $a$ to $C$ along the line $x=0$, the point $L$ in Figure \ref{fig0} moves from the centroid of $bcd$ to $d$. The limiting case $P=a$ has already been discussed. However, if the $L$ in Figure \ref{fig0} is moved to the centroid of $bcd$, we obtain a picture which looks quite different from the left side of Figure \ref{fig2}, which also depicts the case $P=a$. Even accounting for the fact that when $P=a$, the line emanating from $a$ is not part of the expanded cut locus, the diagrams still  differ in that one has a vertical line on the left side, whereas the other has a vertical line in the upper right. The explanation is that paths from $a$ to corresponding points on those lines are exactly the same path on $T$.
 
In Figure \ref{cent} we show the expanded cut locus when $P$ is at the centroid $C$ of $abc$, which is the case $L=d$ in Figure \ref{fig0}. 

\bigskip
\begin{minipage}{6in}
\begin{fig}\label{cent}

{\bf $P$ at the centroid.}

\begin{center}

\begin{\tz}[scale=.8]
\draw [ultra thick] (-1,0) -- (1,0) -- (0,\st) -- (-1,0);
\draw [color=red] (-2,\st) -- (0,-\st) -- (2,\st) -- (-2,\st);
\draw[fill](0,\st*.333) circle[radius=.03];
\node at (0,-1.06*\st) {$U=L$};
\node at (2.2,\st*1.07) {$U=L$};
\node at (-2.2,\st*1.07) {$U=L$};
\end{\tz}
\end{center}
\end{fig}
\end{minipage}
\bigskip

Finally, if $P$ is on the segment $CM$, $U_\pm=L_\pm(=B)$, and they lie on edge $bd$. This is depicted in Figure \ref{CM}. As $P$ moves from $C$ to $M$, $B$  moves from $d$ to the midpoint of $bd$.

\bigskip
\begin{minipage}{6in}
\begin{fig}\label{CM}

{\bf $P$ on the segment $CM$.}

\begin{center}

\begin{\tz}[scale=1.5]
\draw [ultra thick] (-1,0) -- (1,0) -- (0,\st) -- (-1,0);
\draw [color=red] (-.25,-.75*\st) -- (2.25,.75*\st) -- (1.75,1.25*\st) -- (-1.75,.75*\st) -- (-.25,-.75*\st);
\draw (-.25,-.75*\st) -- (0,-\st) -- (1,0) -- (3,0) -- (2.25,.75*\st);
\draw (0,.333*\st) -- (.5,.5*\st);
\draw (1.75,1.25*\st) -- (1,2*\st) -- (0,\st) -- (-2,\st) -- (-1.75,.75*\st);
\draw (0,\st) -- (2,\st);
\node at (0,-1.06*\st) {$d$};
\node at (3.08,0) {$b$};
\node at (1.08,-.08) {$c$};
\node at (-1.08,0) {$b$};
\node at (-2.05,1.05*\st) {$d$};
\node at (-.05,1.05*\st) {$a$};
\node at (2.09,\st) {$d$};
\node at (1,2.06*\st) {$b$};
\node at (-.37,-.75*\st) {$B$};
\node at (1.86,1.28*\st) {$B$};
\node at (2.33,.75*\st) {$B$};
\node at (-1.86,.75*\st) {$B$};
\node at (-.08,.33*\st) {$C$};
\node at (.57,.5*\st) {$M$};
\end{\tz}
\end{center}
\end{fig}
\end{minipage}

\section{Upper bound}\label{upsec}
\begin{thm} There is a decomposition\label{upthm}
$$T\times T=E_1\sqcup E_2\sqcup E_3\sqcup E_4\sqcup E_5$$ with a GMPR $\phi_i$ on $E_i$.
\end{thm}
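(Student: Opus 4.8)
The goal is to construct five explicit, locally compact sets $E_1,\dots,E_5$ covering $T\times T$, each carrying a continuous choice of minimal geodesic. The organizing principle will be the cut-locus stratification developed in Section~\ref{ECLsec}: for a pair $(P,Q)$, the number of minimal geodesics from $P$ to $Q$ is governed by where $Q$ sits relative to the (expanded) cut locus of $P$, and Theorem~\ref{thm1} together with the boundary discussion (Figures~\ref{fig2}, \ref{fig0}, \ref{cent}, \ref{CM}) gives a complete inventory of the possible configurations. I would first let $E_1$ be the "generic" set: the open set of pairs $(P,Q)$ with a \emph{unique} minimal geodesic, on which $\phi_1$ is defined by straightening the segment in whatever flat model contains both points. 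This set is open and dense, and $\phi_1$ is continuous there by standard convexity/continuity of geodesics away from the cut locus.

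**The singular pairs.** What remains is the closed set $\Delta$ of pairs $(P,Q)$ where $Q$ lies on the expanded cut locus of $P$ (equivalently $P$ on that of $Q$). Here the analysis of Section~\ref{ECLsec} shows the cut locus of a generic $P$ is a hexagonal graph (the red polygon minus $a,b,c,d$) with three types of strata: (i) points $Q$ on an open edge of the hexagon, where there are exactly two minimal geodesics; (ii) the multiplicity-$3$ vertices ($U$ and $L$), where there are three; and as $P$ degenerates onto the walls of the fundamental triangle $aCM$ these can collide into (iii) a multiplicity-$4$ point $B$ on an edge $bd$ of $T$, plus the further degenerations at $P=a$, $P=C$, $P=M$. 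My plan is to split $\Delta$ by the number and combinatorial type of minimal geodesics, and on each piece pick one of the minimal geodesics \emph{continuously} by making a consistent "left/up versus right/down" choice using the labeling $U_-,U_0,U_+$ and $L_-,L_0,L_+$ from Figure~\ref{figC}: e.g. on the two-geodesic edge strata assign the geodesic running to the "$+$"-side copy, on the three-geodesic vertex strata assign the "$0$"-copy, and similarly at the multiplicity-$4$ locus. The point is that a coherent selection rule on the hexagon's edges requires more than one set — adjacent edges of the hexagon force incompatible choices at a shared vertex — so one subdivides the edge strata into two sets $E_2, E_3$ (an "upper" family of geodesics and a "lower" family), uses $E_4$ for the vertex strata where the triple splits, and reserves $E_5$ for the deepest degenerations (the $B$-locus and the isolated cases $P\in\{a,C,M\}$ and their orbit under the $S_3$ symmetry of $abc$). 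I would verify local compactness of each $E_i$ by checking it is the difference of a closed set and a closed set (equivalently locally closed), which is immediate from the stratification.

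**Continuity and the main obstacle.** The heart of the argument — and the step I expect to be the real obstacle — is verifying continuity of $\phi_2,\dots,\phi_5$ across the \emph{internal} degenerations of the stratification, i.e. where an edge stratum abuts a vertex stratum, or where the whole hexagon collapses as $P\to aM$ or $P\to$ the line $x=0$. Concretely: if $(P_n,Q_n)\to(P,Q)$ with $(P_n,Q_n)\in E_i$ and $(P,Q)\in E_i$, one must check that the chosen geodesics converge. This is where the explicit formulas (\ref{UL}) earn their keep: the coordinates of $U_\pm,U_0,L_\pm,L_0$ depend continuously on $(x,\alpha)$ on the \emph{closed} triangle $\overline{aCM}$ (with the collisions $L_\pm\to U_\pm$, $U_\pm\to d_\pm$ etc. all visibly continuous), so the endpoint of the selected geodesic in the relevant flat model varies continuously, and one reads off convergence of the geodesics. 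The subtle cases are the ones flagged in the text after Figure~\ref{fig0}: at $P=a$ one of the "lines" of the cut locus is spurious (the two putative geodesics coincide on $T$), and the flat picture is genuinely discontinuous there, so $E_i$ must be arranged so that $(a,\cdot)$ and its symmetric images land in the set $E_5$ where the selection is the trivial unique-geodesic rule inherited by continuity from $E_1$. Once each $E_i$ is checked to be locally compact and each $\phi_i$ continuous, the decomposition $T\times T=E_1\sqcup E_2\sqcup E_3\sqcup E_4\sqcup E_5$ with GMPRs is complete; making the $E_i$ pairwise disjoint (rather than merely a cover) is the usual bookkeeping step of replacing $E_i$ by $E_i\smallsetminus(E_1\cup\dots\cup E_{i-1})$, which preserves local compactness since each is locally closed.
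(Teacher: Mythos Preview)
Your plan is essentially the paper's approach: stratify $T\times T$ by the cut-locus combinatorics of Section~\ref{ECLsec}, let $E_1$ be the unique-geodesic locus, and on the remaining strata make coherent ``side'' choices using the $U_{\pm},U_0,L_{\pm},L_0$ labels. The paper's actual decomposition differs from yours in organization rather than idea: instead of grouping by multiplicity (your $E_2,E_3$ for edges, $E_4$ for triple points, $E_5$ for the $B$-locus), the paper groups by \emph{edge type}. Its $E_2$ consists of the four open cut-locus arcs emanating from the tetrahedron's vertices (with a ``right-hand side of $\overrightarrow{PV}$'' rule), its $E_3$ takes the \emph{closed} $UL$ segments (so the triple points $U,L$ live here, not in a separate set), its $E_4$ is the finite set of (vertex, opposite centroid) pairs, and its $E_5$ is the $B$-locus over the midpoint-to-vertex/centroid cross in Figure~\ref{E5}.

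Two places where your sketch is thinner than it needs to be. First, your stated reason for splitting the edge strata into $E_2$ and $E_3$---incompatibility at shared hexagon vertices---is not the operative obstruction, since those vertices are already excised into your $E_4$. The real issue is \emph{global} continuity as $P$ varies: the paper's ``right-side'' rule on the $d$-arc breaks exactly when $P$ crosses a vertex--centroid segment of face $abc$ (the reflection swaps which side is ``right''), forcing those pairs out of $E_2$ and into $E_3$ (and the residual (centroid, opposite vertex) pairs into the discrete $E_4$). Your outline does not identify this exclusion, and without it a single continuous rule on all four vertex-arcs does not exist. Second, your treatment of $P$ at a vertex is muddled: when $P=a$ only the one arc through $a$ is spurious, but $a$ still has a genuine cut locus with multiplicity-$2$ arcs and a multiplicity-$3$ point, so you cannot push all $(a,\cdot)$ pairs into an $E_5$ with ``the trivial unique-geodesic rule.'' The paper handles vertices by simply letting the $E_2$ and $E_3$ rules apply there (with the degenerate arc absorbed into $E_1$), which your plan should do as well.
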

\begin{proof} Let $G_P$ denote the polygon associated to the point $P$ sketched in red in any of the figures of Section \ref{ECLsec}.
More precisely, one must, of course, use the formulas (\ref{UL}) to determine the vertices of the polygon, and if $P$ is reflected across the line $x=0$ in Figure \ref{figC}, then one must modify the formulas to give the reflection of the polygon. If $P$ is at a vertex, there are two choices for $G_P$, either as in Figure \ref{fig2} or \ref{fig0}. It doesn't matter, but let's  choose \ref{fig0}.

The set $E_1$ is the complement of the total cut locus of $T$. It consists of pairs $(P,Q)$ such that $Q$ is interior to the polygon $G_P$, together with those for which $Q$ is a vertex of $T$, except for cases such as $(P,d)$ in Figure \ref{fig0}. (The only cases when a vertex $V$ is in the cut locus of a point $P$ is when $P$ lies on a segment connecting the centroid of the face opposite $V$ with one of the other vertices, including the centroid, but not the vertices.) Here $\phi_1(P,Q)$ is the straight line from $P$ to $Q$ in our expanded cut locus diagram.

The set $E_2$ consists of pairs $(P,Q)$ where $P$ is not a vertex and $Q$ lies  in the interior of a cut-locus segment from a vertex $V$ to a $U$ or $L$ point, excluding cases in which $P$ lies on a segment from a vertex of face $abc$ to the centroid $C$ of $abc$, and $V=d$. We choose $\phi_2(P,Q)$ to be the path from $P$ to the appropriate point on the right side of the vector from $P$ to $V$. For example, in Figures \ref{figC} and \ref{figB}, $E_2$ contains $(P,Q)$ for all $Q$ in the open segments $aU$, $bL$, $cL$, and $dU$ in \ref{figB}, and in \ref{figC} we choose the segments connecting $P$ with points on $aU_0$, $bL_-$, $cL_0$, and $dU_+$. To maintain continuity of $\phi_2$, we had to exclude points $(P,Q)$ with $P$ on the segment $aC$ and $Q$ on $dU$ because shortest paths from the point $P$ in Figure \ref{figC} to $dU$ must pass through side $ac$, whereas for points $P$ on the left side of $aC$ the diagram is reflected and the shortest paths from $P$ to $dU$ will pass through side $ab$.

This requires some care because, for example, if $P$ is in face $abc$, the cut-locus line out from vertex $d$ plays a different role than the others. Because we have excluded points with $P$ on segments from a vertex to a centroid, we can consider the domain of points $P$ for which $Q$ is on a cut-locus line from vertex $d$ as three topologically disjoint sets $aCbd$, $adcC$, and $bCcd$, as pictured in Figure \ref{3d}.

\bigskip
\begin{minipage}{6in}
\begin{fig}\label{3d}

{\bf $P$-domains for lines through $d$.}

\begin{center}

\begin{\tz}[scale=.9]
\draw (-2,\st*2) -- (2,\st*2) -- (0,0) -- (-2,\st*2);
\draw (1,\st) -- (0,2*\st) -- (-1,\st) -- (1,\st);
\draw [dashed] (1,\st) -- (0,1.333*\st) -- (0,2*\st);
\draw [dashed] (0,1.333*\st) -- (-1,\st);
\node at (0,1.333*\st) {$C$};
\node at (0,-.09*\st) {$d$};
\node at (2.09,2*\st) {$d$};
\node at (-2.09,2*\st) {$d$};
\node at (0,2.09*\st) {$a$};
\node at (1.1,\st) {$c$};
\node at (-1.1,\st) {$b$};
\end{\tz}
\end{center}
\end{fig}
\end{minipage}
\bigskip

The continuity of $\phi_2$ on each of these domains should be fairly clear, but because of the different roles played by points in face $abc$ and the other points, Figure \ref{bigfig} should make it clearer. What is pictured here is a breakdown of the region $aCcd$ in Figure \ref{3d} into subregions together with, for each subregion, the  endpoints of the cut-locus segments out of vertex $d$ corresponding to points $P$ in the subregion. For example, output region 2 is points $U_+$
in Figure \ref{figC} corresponding to points in input region 2, and output region 6 is points $U_0$ in a rotated version of Figure \ref{figC} corresponding to points in input region 6. The entire segment between input regions 5 and 6 maps to output point $b$. The dashed boundary of output regions 5 and 6 are not in the image. We call the points $Q_{\text{max}}$ in Figure \ref{bigfig} because they are the $Q$ farthest from $d$ for a point $P$.

\bigskip
\begin{minipage}{6in}
\begin{fig}\label{bigfig}

{\bf Largest $Q$ for varying $P$.}

\begin{center}

\begin{\tz}[scale=2.1]
\draw (-1,0) -- (1,0) -- (0,\st) -- (-1,0);
\draw (0,\st) -- (2,\st) -- (1,0) -- (1,\st);
\draw [dashed] (1,0) -- (0,.333*\st) -- (0,\st);
\draw (0,.333*\st) -- (2,\st) -- (2,-.333*\st);
\draw (0,\st) -- (1.5,.5*\st);
\draw [dashed] (2,-.333*\st) -- (4,.333*\st);
\draw (3,0) -- (3,.667*\st) -- (2,.333*\st) -- (3,0);
\draw (2,0) -- (3,0) -- (3.5,.5*\st);
\draw (4,.333*\st) -- (2,\st) -- (3,0);
\node at (.5,.333*\st) {$1$};
\node at (.8,.4*\st) {$8$};
\node at (1.2,.4*\st) {$7$};
\node at (.66,.7*\st) {$3$};
\node at (1.34,.7*\st) {$6$};
\node at (1.3,.87*\st) {$5$};
\node at (.7,.87*\st) {$4$};
\node at (.23,.6*\st) {$2$};
\node at (2.12, .65*\st) {$2$};
\node at (2.6,.67*\st) {$1$};
\node at (2.34,.34*\st) {$3$};
\node at (2.8,.35*\st) {$8$};
\node at (2.3, .14*\st) {$4$};
\node at (2.3,-.14*\st) {$5$};
\node at (3.2,.35*\st) {$7$};
\node at (3.55, .34*\st) {$6$};
\node at (3.03,-.09*\st) {$b$};
\node at (2,1.09*\st) {$d$};
\node at (1.06,-.06*\st) {$c$};
\node at (-1.06,-.06*\st) {$b$};
\node at (0,1.09*\st) {$a$};
\node at (1,1.12*\st) {{\rm Input} $P$};
\node at (4,0) {{\rm Output} $Q_{\max}$};
\end{\tz}
\end{center}
\end{fig}
\end{minipage}
\bigskip



The set $E_3$ consists of points $(P,Q)$ of two types. Type (1) has $P$ in sets $\mathcal{I}$ defined as the interior 
of the set of points in a face which are closer to one vertex than to the others. For example, in Figure \ref{figC}, one such region would be the interior of the quadrilateral in the upper third of triangle $abc$. The points $Q$ associated to $P$ are the closed interval $UL$. Type (2) has $P$ all points on segments connecting a vertex $V$ of a face $abc$ with its centroid $C$, including $V$ but not $C$, and $Q$ in the closed segment  connecting the other vertex $d$ with the point $L$ associated with $P$ as in Figure \ref{fig0}. Note that this can be considered as a $UL$ segment, too.

For $P\in\mathcal{I}$ and $Q$ in the closed interval $UL$, we can choose $\phi_3(P,Q)$ to be the appropriate point in $U_+L_+$, using rotations of Figure \ref{figC}. Then in Figure \ref{fig0}, we would choose as $\phi_3(P,Q)$ the path that goes to the right from a point $P$ on $aC$ to the appropriate $Q$ on $dL$.

The rest is easy. Let $E_4$ consist of pairs $(P,Q)$ such that $P$  is a vertex and $Q$ the centroid of the opposite face, or $P$ is a centroid and $Q$ the opposite vertex. Since this is a discrete set, $\phi_4$ can be chosen arbitrarily.

Let $E_5$ be the set of $(P,Q)$ such that $P$ lies in one of six topologically disjoint sets, each of which is the union of lines from the midpoint $M$ of an edge of $T$ to the adjacent vertices and centroids, including $M$ but not the vertices or centroids. See Figure \ref{E5}. A unique point $Q=B$ is associated to each point $P$. Recall that when $U=L$, we call it $B$. These are points of multiplicity 4, as in Figures \ref{fig2} and \ref{CM}. As long as one chooses $\phi_5$ continuously on a set such as Figure \ref{E5}, it can be chosen arbitrarily.

\bigskip
\begin{minipage}{6in}
\begin{fig}\label{E5}

{\bf Typical set for $E_5$.}

\begin{center}

\begin{\tz}[scale=.5]
\draw (.04,-2.96) -- (2.96*\st,-.04);
\draw (2.96*\st,.04) -- (.4,2.96);
\node at (3*\st,0) {$\circ$};
\draw (-.04,2.96) -- (-2.96*\st,.04);
\draw (-2.96*\st,-.04) -- (-.04,-2.96);
\node at (0,3) {$\circ$};
\node at (-3*\st,0) {$\circ$};
\node at (0,-3) {$\circ$};
\draw (0,-2.96) -- (0,2.96);
\draw (-.96*\st,0) -- (.96*\st,0);
\node at (-\st,0) {$\circ$};
\node at (\st,0) {$\circ$};
\draw [color=red] (-2.4*\st,-3) -- (2.4*\st,-3) -- (3*\st,0) -- (2.4*\st,3) -- (-2.4*\st,3) -- (-3*\st,0) -- (-2.4*\st,-3);
\node at (0,3.32) {$a$};
\node at (0,-3.32) {$c$};
\node at (3.21*\st,0) {$d$};
\node at (-3.23*\st,0) {$b$};
\node at (2.46*\st,3.07) {$B$};
\node at (2.46*\st,-3.07){$B$};
\node at (-2.46*\st,3.07){$B$};
\node at (-2.48*\st,-3.09){$B$};
\node at (0,0) {$M$};
\end{\tz}
\end{center}
\end{fig}
\end{minipage}
\bigskip

    \end{proof}
\section{Lower bound}\label{lowersec}

\begin{thm} The space $T\times T$ cannot be partitioned as $E_1\sqcup E_2\sqcup E_3$ with a GMPR on each $E_1$.
\end{thm}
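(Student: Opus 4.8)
The plan is to assume for contradiction that $T\times T=E_1\sqcup E_2\sqcup E_3$ with a GMPR $\phi_i$ on $E_i$, and to derive an impossibility from the local structure of minimal geodesics catalogued in Section~\ref{ECLsec}. Note that the cohomological bound is already exhausted: $\GC(T)\ge\TC(T)=\TC(S^2)=2$ only says that three pieces are needed, so the obstruction to exactly three pieces must be geometric, and the engine will be a continuity constraint. Fix $(P_0,Q_0)$ with $Q_0$ a multiplicity-$m$ point of the cut locus of $P_0$: there are exactly $m$ minimal geodesics $\gamma_1,\dots,\gamma_m$ from $P_0$ to $Q_0$, dividing a punctured neighborhood of $Q_0$ in $T$ into open sectors $\sigma_1,\dots,\sigma_m$, where $\sigma_j$ consists of the nearby points $Q$ off the cut locus whose unique minimal geodesic from $P_0$ limits on $\gamma_j$. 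If $(P_0,Q_0)\in E_i$ with $\phi_i(P_0,Q_0)=\gamma_{j_0}$, then, because $\phi_i$ is continuous and valued in minimal geodesics, there is a neighborhood $N$ of $(P_0,Q_0)$ in $T\times T$ with $E_i\cap N$ disjoint from $\{P_0\}\times\sigma_j$ for every $j\ne j_0$; and, since the sectors deform continuously with $P_0$ and the unique minimal geodesic is continuous off the cut locus, the same holds for the perturbed pictures, so $E_i\cap N$ is confined to ``$\gamma_{j_0}$-colored'' pairs. For $m=2$ this is a co-orientation statement; for $m=3$ (the points $U,L$ of Figure~\ref{figB}) it pins $E_i$ to one of three sectors; for $m=4$ (the point $B$ of Figures~\ref{cent}--\ref{CM}), to one of four. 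This lemma I would prove in full from Theorem~\ref{thm1} and the degenerate pictures in Figures~\ref{fig0}--\ref{CM}.

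One singular point is not enough: at a single $U$, two pieces suffice locally, and at a single $B$, three do (one piece takes the privileged sector, the other two split the remaining contractible ``fan''), so this only recovers $\GC(T)\ge2$. I would instead localize to a one-parameter \emph{family} of singular configurations and use a monodromy. Write $\pi$ for the evaluation map from the space of minimal geodesics in $T$ to $T\times T$. The natural vehicle is a small loop $\ell$ in $T$ --- around a vertex $a$, or around the centroid $C$ of a face --- along which the cut-locus structure of $P$ undergoes a cyclic sequence of coalescences (the merges $U,L\leftrightarrow B$ near edge midpoints, Figures~\ref{cent}--\ref{CM}, or collisions of a triple point with the opposite vertex near $a$, cf.\ Figure~\ref{fig0}). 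Taking $A=\pi^{-1}$ of a tubular neighborhood of the corresponding arc of cut-locus points, fibered over $\ell$ --- equivalently, the preimage of a small $3$-sphere linking a quadruple configuration $(M,N)$, with $M,N$ midpoints of opposite edges --- one has over $A$ a minimal geodesic that is generically unique but ``$4$-colored'' by which $\gamma_j$ it limits on; the colors rotate across the codimension-one cut-locus stratum, cycle at the triple-point strata, and all meet at the quadruple strata where $\ell$ punctures $A$; and the continuity lemma confines each $E_i\cap A$, near each such stratum, to a single color. A partial monodromy computation: around $a$, the three vertex-collision crossings are realized by the reflections of $T$ inducing the transpositions $(b\,c)$, $(b\,d)$, $(c\,d)$ on vertices, whose product $(b\,c)(b\,d)(c\,d)=(b\,d)$ is nontrivial, so the monodromy on the combinatorial cut-locus type along $\ell$ is already nontrivial.

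The main obstacle is to convert the collection of local ``one color near each stratum'' constraints, glued along $A$ by this monodromy, into a genuine impossibility for three labels --- while leaving room for four, consistently with our being unable to determine $\GC(T)$ exactly. This needs: (a) the precise gluing --- how the four branches over the generic part of $A$ connect through the double-, triple-, and quadruple-point strata, which is where the monodromy enters and which is the delicate step, requiring one to follow individual geodesics (not just the combinatorial type) through each coalescence; (b) a rigidity argument showing the resulting ``coloring with prescribed defects'' of $A$ admits no refinement into three section-admitting pieces, which I expect to reduce to a small covering-space or combinatorial statement once (a) is settled; and (c) control of the places where several medians meet (vertices, edge midpoints) and the stratification of $T\times T$ by number of minimal geodesics is itself singular --- handled by the same perturbation estimates as in the continuity lemma, together with the standard observation that permitting the $E_i$ to be merely locally compact rather than open does not help. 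I expect step (a) to be the crux.
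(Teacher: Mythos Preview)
Your proposal is not a proof but a research plan: you explicitly defer the crux (step (a), the precise gluing of branches through the coalescences) and step (b) to future work, and a monodromy computation on the combinatorial cut-locus type is not by itself an obstruction to sectioning with three pieces. So as written there is a genuine gap: nothing in the proposal actually rules out a three-piece partition.

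More importantly, you have misdiagnosed where the obstruction lives. You assert that at a single $B$-point three pieces suffice locally, and therefore look for a global monodromy obstruction along a loop of $P$'s. But the paper's argument is entirely local at the single pair $(M,B)$ (midpoints of opposite edges). The point you are missing is that the \emph{stratification} of $T\times T$ by number of minimal geodesics already has depth four in every neighborhood of $(M,B)$: the multiplicity-$4$ point $(M,B)$ is a limit of multiplicity-$3$ points $(P',U_{P'})$ with $P'\to M$ along a median; each such $(P',U_{P'})$ is a limit of multiplicity-$2$ points $(P',Q)$ with $Q$ on the cut-locus arc $dU_{P'}$; and each of those is a limit of multiplicity-$1$ points $(P',Q')$ with $Q'$ just off the cut locus. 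The paper then runs a nested-sequence / diagonal argument down this flag: the set $E_1$ containing $(M,B)$ is pinned to one direction, so infinitely many $(P'_n,U_{P'_n})$ lie in a second set $E_2$ pinned to another direction; hence infinitely many $(P'_n,Q_{n,m})$ lie in a third set $E_3$ pinned to a third direction; hence infinitely many off-cut-locus $(P'_n,Q_{n,m,\ell})$, whose unique geodesic goes in the remaining direction, lie in none of $E_1,E_2,E_3$. No loop, no monodromy, no covering-space combinatorics---just your own continuity lemma applied four times along a flag of strata, with diagonal limits to keep everything inside a single neighborhood of $(M,B)$.

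So the fix is not to globalize, but to look harder at the local picture: your claim that ``at a single $B$, three do'' is only true if you freeze $P=M$; once you let $P$ vary in a neighborhood of $M$ as well, the nearby triple points $U_{P'}$ enter and three pieces no longer suffice.
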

\begin{proof}
 Let $M$ be the midpoint of $ac$ in Figure \ref{figC}, and $P'$ a point on the segment connecting $M$ and $P$ in that figure. The expanded cut locus for $P'$ is as in the figure, and as $P'$ approaches $M$, $L_{\pm}$ approaches $U_{\pm}$, and they and $U_0$ and $L_0$ approach the midpoint of $bd$. We call this point $B$.

Suppose $(M,B)\in E_1$, and $\phi_1(M,B)$ is the path which goes down (toward the limit of $L_0$ in Figure \ref{figC}). (Going up is handled similarly, reversing the roles of $U$ and $L$. We will consider later how to handle it when $\phi_1(M,B)$ goes left or right.) We cannot have a sequence of $P'$ as in the figure with $P'\to M$ and $(P',U_{P'})\in E_1$ because that would imply $\phi_1(P',U_{P'})\to \phi_1(M,B)$, which is impossible since $\phi(P',U_{P'})$ must go either left, right, or up. There is a sequence of such $P_n'$ all in the same $E_i$, which we call $E_2$, and, restricting more, all $\phi_2(P_n',U_{P_n'})$ going in the same direction, which we will suppose is left. We will consider later the minor modifications required if $\phi_2(P_n',U_{P_n'})$ goes right or up.

For each such $P_n'$, there is an interval of $Q$'s in the cut locus of $P_n'$ abutting $U_{P_n'}$ along the segment from $d$ to $U_{P_n'}$. (It is close to $U_0$ and $U_+$ in Figure \ref{figC}.) There cannot be infinitely many of these with $(P_n',Q)\in E_2$ since $\phi(P_n',Q)$ must go right or up, but $\phi_2(P_n',U_{P_n'})$ goes left. If there were, for infinitely many $n$, a sequence $Q_{n,m}$ approaching $U_{P_n'}$ with $(P_n,Q_{n,m})\in E_1$, then the sequence $(P_n,Q_{n,n})$ would approach $(M,B)$, but $\phi_1(P_n,Q_{n,n})$ cannot approach $\phi_1(M,B)$, since the possible directions differ. Thus there exist sequences $Q_{n,m}\to U_{P_n'}$ with $(P_n',Q_{n,m})$ in a new set $E_3$, and we may assume that $\phi_3(P_n',Q_{n,m})$ all have the same direction, which we may assume to be ``up,'' i.e., toward the vicinity of $U_0$.

For each $(n,m)$, there exists a sequence $Q_{n,m,\ell}\to Q_{n,m}$ such that the unique minimal geodesic from $P_n'$ to $Q_{n,m,\ell}$ goes to the right, i.e., in the vicinity of $U_+$.
For each $(n,m)$, there cannot be infinitely many $\ell$ with $(P_n',Q_{n,m,\ell})\in E_3$, since $\phi_3(P_n',Q_{n,m})$ and $\phi(P_n',Q_{n,m,\ell})$ have different directions. We restrict now to, for each $(n,m)$, an infinite sequence of $\ell$ such that $(P_n',Q_{n,m,\ell})\not\in E_3$. Taking a diagonal limit on $m$ and $\ell$, $(P_n',Q_{n,m,\ell})\to (P_n',U_{P_n'})$; since $\phi_2(P_n',U_{P_n'})$ and $\phi(P_n',Q_{n,m,\ell})$ have opposite directions, $(P_n',Q_{n,m,\ell})\not\in E_2$ for an infinite sequence of $m$'s and all $\ell\ge L_m$ for an increasing sequence of integers $L_m$. Now taking a diagonal limit over  $n$, $m$, and $\ell$, we approach $(M,B)$. Since the directions of $\phi_1(M,B)$ and $\phi(P_n',Q_{n,m,\ell})$ differ, there must be an infinite sequence of $(P_n',Q_{n,m,\ell})$ not in $E_1$. So it requires a fourth set $E_4$.

Now we discuss the minor changes for other cases to which we alluded above. If $\phi_2(P_n',U_{P_n'})$ went right, instead of left, then the $Q$'s will be chosen on the segment from vertex $a$ to $U_{P_n'}$, close to $U_0$ and $U_-$, and the rest of the argument proceeds similarly.

If instead of going down or up, $\phi_1(M,B)$ goes left, then we consider $P'$ on a little segment going sharply down and left from $M$. The expanded cut locus will be similar to that in Figure \ref{figC}, but with $U_+L_+$ and $U_0$ interchanged (and moved slightly to the other side of line $bdb$), and similarly for $U_-L_-$ and $L_0$. These $P'$ have $\phi(P',U_{P'})$ going up, down, or right, and an argument like the one above works.
\end{proof}
\def\line{\rule{.6in}{.6pt}}

\end{document}